\newtheorem{theorem}{Theorem}
\newtheorem{lemma}[theorem]{Lemma}
\newtheorem{remark}[theorem]{Remark}
\begin{document}
\title{A one-relator group with long lower central series}
\author{Roman Mikhailov}
\address{Chebyshev Laboratory, St. Petersburg State University, 14th Line, 29b,
Saint Petersburg, 199178 Russia and St. Petersburg Department of
Steklov Mathematical Institute} \email{rmikhailov@mail.ru}
\thanks{This research is supported by the Chebyshev
Laboratory  (Department of Mathematics and Mechanics, St.
Petersburg State University)  under RF Government grant
11.G34.31.0026, and by JSC "Gazprom Neft", as well as by the RF
Presidential grant MD-381.2014.1.}
\begin{abstract}
A one-relator group with lower central series of length $\omega^2$
is constructed. This answers a problem of G. Baumslag.
\end{abstract}
\maketitle

\section{Introduction} For a group $G$, the lower central series are defined
inductively as follows: $$\gamma_1(G)=G,\
\gamma_{\alpha+1}(G)=[\gamma_\alpha(G),G]$$ and
$\gamma_\tau(G)=\cap_{\alpha<\tau}\gamma_\alpha(G)$ for a limit
ordinal $\tau$. The smallest ordinal $\alpha$, such that
$\gamma_{\alpha}(G)=\gamma_{\alpha+1}(G)$ is called the {\it
length of the lower central series of $G$}. First examples of
finitely presented groups with the lower central series of length
greater than $\omega$ were constructed by J. Levine \cite{Levine}.
T. Cochran and K. Orr constructed examples of 3-manifold groups
with lower central series of length greater than $\omega$ in
\cite{CochranOrr}.

G. Baumslag asked the following question (\cite{Baumslag}, Problem
10): {\it Is the lower central series of a 1-relator group of
length at most $\omega n$ for some finite ordinal $n$?}

Consider the following one-relator group
$$
G=\langle a,b\ |\ a^{b^2}=aa^{3b}\rangle
$$
This group is a semidirect product $F_2\rtimes \mathbb Z$, where a
generator of the cyclic group acts on $F_2=F(x,y)$ as the
automorphism

\begin{align*}
& x\mapsto y\\
& y\mapsto xy^3
\end{align*}
The main result is the following
\begin{theorem}
The group $G$ has the lower central series of length $\omega^2$.
\end{theorem}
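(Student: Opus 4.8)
The plan is to compute the whole transfinite lower central series and show that it collapses through the filtration of the free normal subgroup one layer at a time. Write $N=F_2=F(x,y)$, so that $G=N\rtimes\langle b\rangle$ with $b$ acting by the automorphism $\phi\colon x\mapsto y,\ y\mapsto xy^3$. On $A:=N^{ab}=\Z^2$ this is the matrix $M=\left(\begin{smallmatrix}0&1\\1&3\end{smallmatrix}\right)$, with characteristic polynomial $t^2-3t-1$, eigenvalues $\lambda,\mu=\tfrac{3\pm\sqrt{13}}{2}$ satisfying $\lambda\mu=-1$, $\lambda+\mu=3$, and $\det(M-1)=-3$. The target is the formula
\[
\gamma_{\omega\cdot k}(G)=\gamma_{k+1}(N)\qquad(k\ge 1),
\]
from which $\gamma_{\omega^2}(G)=\bigcap_k\gamma_{\omega k}(G)=\bigcap_k\gamma_{k+1}(N)=\gamma_\omega(N)=1$, while each inclusion $\gamma_{k+1}(N)\supsetneq\gamma_{k+2}(N)$ is strict (the free Lie algebra is nonzero in every degree). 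This yields length exactly $\omega^2$.

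The technical engine is an eigenvalue computation on the free Lie layers $L_k=\gamma_k(N)/\gamma_{k+1}(N)$, on which $\phi$ acts with eigenvalues $\lambda^i\mu^j$ ($i+j=k$). I would prove the key lemma that for every $k$,
\[
\bigcap_{m\ge 0}(\phi-1)^m L_k=0,
\]
equivalently that $\phi-1$ has no nonzero invariant sublattice of determinant $\pm1$. Since $\phi$-invariant rational subspaces are spanned by Galois-closed sets of eigenvectors, it suffices to show $\prod(\lambda^i\mu^j-1)\ne\pm1$ over every nonempty Galois-closed index set. Each Galois orbit contributes a rational integer: a fixed point $i=j$ gives $(-1)^i-1\in\{0,-2\}$, and a conjugate pair $\{(i,j),(j,i)\}$ gives $(-1)^k+1-(-1)^{\min(i,j)}p_{|i-j|}$, where $p_n=\lambda^n+\mu^n$ satisfies $p_0=2$, $p_n=3p_{n-1}+p_{n-2}$, so $p_n\ge 3$ for $n\ge1$. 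A short parity check then shows every orbit contributes $0$ or an integer of absolute value $\ge2$, so no such product equals $\pm1$. This is exactly the input guaranteeing that each abelian layer is exhausted by the $b$-action in precisely $\omega$ steps and that the relevant split extensions are residually nilpotent.

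With the lemma in hand I would prove the formula by its two inclusions. For $\gamma_{\omega k}(G)\subseteq\gamma_{k+1}(N)$ I pass to the truncation $\overline{G}_k=(N/\gamma_{k+1}(N))\rtimes\langle b\rangle$ and show it is residually nilpotent with lower central series of length $\le\omega k$: its $k$ characteristic layers $L_1,\dots,L_k$ are each $\phi$-equivariant, each is exhausted in $\omega$ steps by the lemma, and they stack (using the residual-nilpotence criterion of Theorem~KM/Theorem~H for nilpotent-by-cyclic groups). For the reverse inclusion $\gamma_{k+1}(N)\subseteq\gamma_{\omega k}(G)$ I must show every layer genuinely survives for the full $\omega$ within its block, and this is where two coprime contraction rates collide. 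Already for $L_2$: the $b$-commutator contracts $\langle[x,y]\rangle$ by the factor $\phi|_{L_2}-1=\det M-1=-2$ (powers of $2$), whereas the internal bracket pairing $A\times A\to L_2$ contracts by the index $|\det(M-1)|=3$ of $(M-1)A$ (powers of $3$); since $\gcd(2,3)=1$ the two images together fill $L_2$ at every finite stage, so $[x,y]\in\gamma_n(G)$ for all $n$ and hence $\gamma_2(N)\subseteq\gamma_\omega(G)$. The higher layers are handled by the same coprimality principle, inductively.

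The main obstacle I anticipate is the reverse inclusion: controlling the nonabelian cross-terms well enough to prove that each layer persists for exactly $\omega$ steps — neither dying early (which would shorten the series below $\omega^2$) nor surviving into the next block. Making the coprimality mechanism precise through all layers simultaneously, and matching it against the residual-nilpotence upper bound so that the two meet exactly at $\gamma_{k+1}(N)$, is the delicate heart of the argument; the eigenvalue lemma above is the clean part that makes both ends computable.
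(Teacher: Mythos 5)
Your upper bound coincides with the paper's argument in all essentials: the same passage to the free fiber $N=F(x,y)$ and its truncations $(N/\gamma_{k+1}(N))\rtimes\langle b\rangle$, the same residual-nilpotence criterion for a $\Z[\langle b\rangle]$-lattice via eigenvalues of $\phi-1$ (the paper's Lemma~2, proved by the Krull intersection theorem), the same embedding of the Lie layer into the tensor power via Magnus--Witt, and the same number theory; indeed your Galois-orbit computation with $p_n=\lambda^n+\mu^n$ (each orbit contributing $0$ or an integer of absolute value at least $2$) is a cleaner packaging of the paper's estimates on $(\alpha_1^l-1)(\alpha_2^l-1)$ and $(\alpha_1^s+1)(\alpha_2^s+1)$. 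One repair is needed there: ``Theorem~KM/Theorem~H'' is not an available result, so the stacking step must be proved by hand, and this is how the paper does it: modulo $\gamma_{k+2}(N)$ the layer $L_{k+1}$ is central in $N$, so in the truncation every element of $\gamma_{\omega k+m}(G)$ is a product of conjugates of $[\alpha,\underbrace{b,\dots,b}_{m}]$ with $\alpha\in\gamma_{k+1}(N)$, which is exactly what converts the group statement into the module statement your eigenvalue lemma controls.

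The genuine gap is your reverse inclusion $\gamma_{k+1}(N)\subseteq\gamma_{\omega k}(G)$, which you correctly identify as delicate and then leave unproved; as written, the proposal establishes no lower bound beyond $k=1$. But here you are demanding more than the theorem needs. Your $L_2$ seed is sound and is precisely the paper's pair of relations ($a^3=[a,b^2]^{b^{-1}}$ giving generalized $3$-torsion from $|\det(M-1)|=3$, and $[a^b,a]^2=[a,a^b,a^{3b}b^{-1}]$ giving generalized $2$-torsion from $\phi|_{L_2}-1=-2$), yielding $[x,y]\in\gamma_\omega(G)$. From this single seed the paper finishes in one line, with no layer-by-layer coprimality: the elements
\begin{equation*}
w_k=[[x,y],x,\underbrace{[x,y],\dots,[x,y]}_{k}]
\end{equation*}
lie in $\gamma_{(k+1)\omega}(G)$, because bracketing an element of $\gamma_{\omega k}(G)$ with an element of $\gamma_\omega(G)\subseteq\bigcap_n\gamma_n(G)$ lands in $\bigcap_n\gamma_{\omega k+n}(G)=\gamma_{\omega(k+1)}(G)$, and each $w_k$ is nontrivial since it is a basic-type commutator in the free group $N=F(x,y)$. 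Combined with $\gamma_{\omega^2}(G)=1$, any stabilization $\gamma_\alpha(G)=\gamma_{\alpha+1}(G)$ with $\alpha<\omega^2$ would force $\gamma_\alpha(G)=1$, contradicting $w_k\neq 1$; so the length is exactly $\omega^2$ without your full formula. (That formula $\gamma_{\omega k}(G)=\gamma_{k+1}(N)$ is in fact correct at least for $k=1,2$ --- the paper proves these cases, the second by exactly your $2$/$3$ mechanism --- but carrying the generalized $2$- and $3$-torsion bookkeeping through all higher layers is nontrivial and your sketch only gestures at it; you should either supply that induction or, better, replace it by the witness argument above.)
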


The group $G$ provides an example of a one-relator group with
length greater than $\omega n$ for any finite $n$, thus answering
the above problem of G. Baumslag.

\section{Proof of theorem 1}
Let $\Gamma$ be a group and $M$ a $\mathbb Z[\Gamma]$-module. $M$
is called {\it residually nilpotent} if
$$
\cap_kM\Delta^k(\Gamma)=0,
$$
where $\Delta^k(\Gamma)$ is the $k$th power of the augmentation
ideal $\Delta(\Gamma):=\ker\{\mathbb Z[\Gamma]\to \mathbb Z\}.$
\begin{lemma}
Let $M$ be a finitely generated free abelian group of rank $m\geq
1$ and an infinite cyclic group with generator $t$ acts on $M$ as
a matrix $A\in GL_m(\mathbb Z)$. Suppose that the product of any
collection of eigenvalues of $A-Id$ is not $\pm 1$. Then the
module $M$ is residually nilpotent $\mathbb Z[\langle
t\rangle]$-module.
\end{lemma}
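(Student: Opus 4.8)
The plan is to translate residual nilpotence into a statement about the iterated images of the integer matrix $B:=A-Id$, and then to exploit the tension between the \emph{rational} stabilization of these images and their \emph{integral} shrinking. Since $\langle t\rangle\iso\Z$, the group ring is the Laurent polynomial ring $\Z[t,t^{-1}]$, its augmentation ideal is $\Delta=(t-1)$, and $\Delta^k=(t-1)^k\Z[t,t^{-1}]$. As $t$ acts on $M$ as $A$, we have $M\Delta^k=M(A-Id)^k=MB^k$, so residual nilpotence of $M$ is exactly the assertion that the descending chain $M\supseteq MB\supseteq MB^2\supseteq\cdots$ satisfies $N:=\bigcap_{k\ge1}MB^k=0$. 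The whole argument is organized around proving this.

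First I would pass to $V:=M\otimes\Q$ and decompose it under $B$ as $V=V_0\oplus V_{\neq0}$, where $V_0$ is the generalized eigenspace for the eigenvalue $0$ and $V_{\neq0}$ is the sum of the remaining generalized eigenspaces; thus $B$ is nilpotent on $V_0$ and invertible on $V_{\neq0}$. Since the nilpotency index on $V_0$ is at most $m$, one has $VB^k=V_{\neq0}$ for all $k\ge m$, whence $MB^k\subseteq V_{\neq0}$ and $MB^k$ is a full lattice in $V_{\neq0}$ of rank $r:=\dim_\Q V_{\neq0}$. In particular $N\subseteq V_{\neq0}$, and $W:=N\otimes\Q$ is a $B$-invariant subspace of $V_{\neq0}$ on which $B$ acts invertibly; $N$ is a full lattice in $W$.

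The key step, and the one I expect to be the main obstacle, is to show that $B$ acts \emph{surjectively} on $N$, i.e. $NB=N$. The inclusion $NB\subseteq N$ is immediate from $MB^{k+1}\subseteq MB^k$. For the reverse inclusion, take $x\in N$; since $B$ is injective on $V_{\neq0}\ni x$, there is a unique $y\in V_{\neq0}$ with $yB=x$. For each $k\ge m$ we may write $x=z_kB$ with $z_k\in MB^k\subseteq V_{\neq0}$, and injectivity of $B$ on $V_{\neq0}$ forces $z_k=y$; hence $y\in MB^k$ for every $k\ge m$, so $y\in N$ and $x=yB\in NB$. The delicate point here is that the lifting only stays inside $N$ because $MB^k\subseteq V_{\neq0}$, which holds only once $k\ge m$, so the bookkeeping between the integral preimages $z_k$ and the rational preimage $y$ must be handled carefully.

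Finally I would compute an index. Since $N$ is a full lattice in $W$ and $B|_W\in GL(W)$ carries $N$ into itself, $[N:NB]=|\det(B|_W)|$; and because $W\subseteq V_{\neq0}$ is $B$-invariant, $\det(B|_W)$ is the product of a sub-collection of the nonzero eigenvalues of $A-Id$. But $NB=N$ forces $[N:NB]=1$, so this product equals $\pm1$. By hypothesis the product of any nonempty collection of eigenvalues of $A-Id$ is never $\pm1$, so the only possibility is that the collection is empty, i.e. $W=0$ and hence $N=0$. This shows $\bigcap_k M\Delta^k=0$, i.e. $M$ is residually nilpotent.
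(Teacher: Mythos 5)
Your proof is correct, and it diverges from the paper's at exactly one point: how the key identity $N(1-t)=N$ is obtained for $N=\bigcap_k M\Delta^k$. The paper gets it in one line from the Krull intersection theorem (via Artin--Rees, applicable since $\Z[t,t^{-1}]$ is Noetherian and $M$ is finitely generated), whereas you prove $NB=N$ (with $B=A-Id$) by hand, using the Fitting decomposition $M\otimes\Q=V_0\oplus V_{\neq 0}$ and uniqueness of $B$-preimages in $V_{\neq 0}$ to lift an element of $N$ back into every $MB^k$; your bookkeeping there (that $MB^k\subseteq V_{\neq 0}$ only for $k\geq m$, after which all integral preimages $z_k$ coincide with the rational preimage $y$) is sound. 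Both arguments finish the same way: a surjective endomorphism of the finitely generated free abelian group $N$ is bijective, so the determinant of the restriction of $A-Id$ to $N\otimes \Q$ is $\pm 1$, and since $N\otimes\Q$ is an invariant subspace this determinant is a product of a nonempty sub-collection of eigenvalues of $A-Id$, contradicting the hypothesis. Your route buys self-containedness --- it replaces the commutative-algebra black box with linear algebra, exploiting that $\Delta$ is principal (which is precisely why the lifting argument works; for a non-principal ideal the uniqueness-of-preimage step has no analogue and Krull's theorem is genuinely needed) --- and it is more scrupulous than the paper on two small points: it handles a singular $A-Id$ (the lemma's hypothesis permits the eigenvalue $0$, which your $V_0$ summand absorbs), and it makes explicit that the relevant determinant is that of the restriction to $N\otimes\Q$, not $\det(A-Id)$ itself, as the paper's displayed equation loosely suggests. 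What the paper's route buys is brevity and immediate generalizability beyond the cyclic case. Two cosmetic remarks: when identifying $M\Delta^k$ with $MB^k$ you should note that $MB^k$ is already $t$-invariant (as $A$ is invertible and commutes with $B$), so multiplying by $\Z[t,t^{-1}]$ adds nothing; and your reading of the hypothesis as excluding only \emph{nonempty} collections is the intended one.
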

\begin{proof}
Let $\Delta$ be the augmentation ideal of $\mathbb Z[\langle
t\rangle]$. Observe that, for any $n\geq 1$, the power $\Delta^n$
is the ideal generated by $(1-t)^n.$ Consider the submodule
$N:=M\Delta^\omega=\cap_{k\geq 1}M\Delta^k$ and assume that $N\neq
0.$ The Krull intersection theorem implies that $NI=N$, hence
$N(1-t)=N$ and the restriction of $(1-t)$ on $N$ defines a
bijection on $N$. Hence $det(A-Id)=\pm 1=\beta_1\dots \beta_k$,
where $(\beta_1,\dots, \beta_k)$ is some collection of eigenvalues
of $A-Id$. This gives a needed contradiction. Hence $N=0$.
\end{proof}

Now we prove theorem 1. As usual, if $x,y,a_1,\dots,a_{k+1}$ are
elements of a group $G$ we set $[x,y] =x^{-1}y^{-1}xy$,
$x^y=y^{-1}xy$ and define
$$[a_1,a_2,\dots, a_{k+1}]=[[a_1,\dots,a_{k}], a_{k+1}]\ (\, k>1).$$ For $n\geq 1$, define $T_n$ as a normal
closure in $G$ of all elements of the form $[x_1,\dots,x_n],$
where all $x_i$-s are either $a$ or $a^b$. We have
$$
H_1=\langle a\rangle^G=F(a,a^b),\ H_2=\langle [a,a^b]\rangle^G,\
H_3=\langle [a,a^b,a], [a,a^b,a^b]\rangle^G
$$
etc. Observe that
$$
H_{n}=\gamma_n(H_1).
$$
We claim that, for all $n\geq 1,$
\begin{equation}\label{omegan}
\gamma_{n\omega}(G)\subseteq H_{n+1}
\end{equation}

First we prove (\ref{omegan}) for $n=1$. Consider the quotient of
$G$ by the normal closure of the element $[a,a^b]:$
$$
H=\langle a,b\ |\ a^{b^2}=aa^{3b}, [a,a^b]=1\rangle
$$
The group $H$ is the metabelian polycyclic group $(\mathbb Z\oplus
\mathbb Z)\rtimes \mathbb Z$, where the generator of the cyclic
group $\mathbb Z$ acts on $\mathbb Z\oplus \mathbb Z$ as the
matrix
\begin{equation}\label{matr}
U:=\begin{pmatrix}
0 & 1 \\
1 & 3
\end{pmatrix}
\end{equation}
The eigenvalues of the matrix $U-Id$ are $
\frac{3\pm\sqrt{13}}{2}-1.$ By lemma 2, the group $H$ is
residually nilpotent. Lets show now that $[a,a^b]\in
\gamma_\omega(G)$. Observe that the element $a$ is a generalized
3-torsion element, i.e. for every $n$ there exists $t(n)$, such
that $a^{3^{t(n)}}\in\gamma_n(G)$. Indeed, since
\begin{equation}\label{3tor} a^3=[a,b^2]^{b^{-1}},\end{equation}
$$
a^{3^k}\in\gamma_{k+1}(G),\ k\geq 1.
$$
Therefore, the element $[a,a^b]$ also is a generalized 3-torsion
element. On the other hand, taking the commutator with $a^b$ of
the both sides of the relation, we get
$$
[a^{b^2},a^b]=[a,a^b]^{a^{3b}},
$$
hence $[a^b,a]=[a,a^b]^{a^{3b}b^{-1}}$ and
\begin{equation}\label{relation}
[a^b,a]^2=[a,a^b,a^{3b}b^{-1}]\end{equation}  Therefore $[a,a^b]$
is a generalized 2-torsion element. This implies that $[a,a^b]\in
\gamma_\omega(G)$. Since $H$ is residually nilpotent, we conclude
that $\gamma_\omega(G)=\langle [a,a^b]\rangle^G,$
$G/\gamma_\omega(G)=H$ and the statement (\ref{omegan}) is proved
for $n=1$. It is easy to see at this point that the group $G$ has
length greater than $\omega$. Indeed, since $H_2(H)=\mathbb Z/2$
and $H_2(G)=0$, we get
$$
H_2(H)=H_2(G/\gamma_\omega(G))\simeq
\gamma_{\omega}(G)/\gamma_{\omega+1}(G)\simeq \mathbb Z/2
$$
and
$$
[a,a^b]\in\gamma_{\omega}(G)\setminus\gamma_{\omega+1}(G),\
[a,a^b]^2\in\gamma_{\omega+1}(G).
$$

Now suppose that the statement (\ref{omegan})  holds for a given
$n$. Here we prove this statement for $n+1$.

We have an inclusion
$$
\gamma_{n\omega+k}(G)\subseteq [H_{n+1},\underbrace {G,\dots,
G}_k],\ k\geq 1
$$
Define the group $T:=G/H_{n+2}$. By the assumption of induction,
the term $\gamma_{n\omega}(T)$ lies in the quotient
$H_{n+1}/H_{n+2}$, the term $\gamma_{n\omega+k}(T)$ is a subgroup
of the quotient $([H_{n+1},\underbrace {G,\dots,
G}_k]H_{n+2})/H_{n+2}$ and can be presented as a product of
conjugates of the elements
$$
[\alpha, \underbrace {b,\dots, b}_k].H_{n+2},\ \alpha\in H_{n+1}.
$$
Consider the quotient $H_{n+1}/H_{n+2}$ as a $\mathbb Z[\langle
b\rangle]$-module. If the module $H_{n+1}/H_{n+2}$ is residually
nilpotent $\mathbb Z[\langle b\rangle]$-module, i.e.
$$
0=\cap_k (H_{n+1}/H_{n+2})\Delta^k,
$$
where $\Delta$ is the augmentation ideal of $\mathbb Z[\langle
b\rangle]$, then the intersection
$$
\cap_k([H_{n+1},\underbrace {G,\dots, G}_k]H_{n+2})/H_{n+2}
$$
is trivial in $T$ and the statement (\ref{omegan}) will follow for
$n+1$. Denote $M:=H_1/H_2.$ Since $H_1$ is a free group and
$H_i=\gamma_i(H_1),\ i\geq 1,$ the quotient $H_{n+1}/H_{n+2}$ is
the $n+1$-st Lie power $L^{n+1}(M)$ by Magnus-Witt theorem (see
\cite{Magnus}, \cite{Witt}). Lie powers are subgroups of tensor
powers $L^{n+1}(M)\subseteq \otimes^{n+1}(M)$ and therefore,
$$
\cap_k L^{n+1}(M)\Delta^k\subseteq \cap_k \otimes^{n+1}(M)\Delta^k
$$
Here we consider the tensor powers of $M$ as $\mathbb Z[\langle
b\rangle]$-modules with the diagonal action of $\langle b\rangle$.

For given matrices $A$ and $B$, the eigenvalues of their tensor
product $A\otimes B$ consist of all possible products of the
eigenvalues of $A$ and $B$. The eigenvalues of the matrix $U$ are
$\alpha_1=\frac{3+\sqrt{13}}{2},\ \alpha_2=\frac{3-\sqrt{13}}{2}.$
Hence, the eigenvalues of the the tensor power $U^{\otimes n+1}$
are $\pm \alpha_1^l$ and $\pm \alpha_2^s$ for some $l,s\geq 1$.
Suppose that one can present $\pm 1$ as a product of elements of
the form $\pm\alpha_1^l\pm 1$ and $\pm\alpha_2^s\pm 1$. Then the
product of their conjugates in the sense of quadratic
irrationality also is $\pm 1$. Since $\alpha_1^l\pm 1$ is the
conjugate quadratic irrationality to $\alpha_2^l\pm 1$, one can
present $\pm 1$ as a product of integers of the form
$$
(\alpha_1^l-1)(\alpha_2^l-1)\ \text{and}\
(\alpha_1^s+1)(\alpha_2^s+1)
$$
for different $l,s\geq 1$. Since $(\alpha_1-1)(\alpha_2-1)=-3$
divides $(\alpha_1^l-1)(\alpha_2^l-1)$ for any $l\geq 1$, the
absolute values of the terms $(\alpha_1^l-1)(\alpha_2^l-1)$ are
greater than 1. For an odd $s$, the product
$(\alpha_1^s+1)(\alpha_2^s+1)$ divides
$(\alpha_1+1)(\alpha_2+1)=3.$ For an even $s$, the terms
$(\alpha_1^s+1)$ and $(\alpha_2^s+1)$ are greater than 1. We get a
contradiction with assumption that one can present $\pm 1$ as a
product of elements of the form $\pm\alpha_1^l\pm 1$ and
$\pm\alpha_2^s\pm 1$.

We conclude that, for any $n$, the matrix $U^{\otimes n+1}-Id$
satisfies the conditions of lemma 2 and therefore the $(n+1)$st
tensor power $M^{\otimes n+1}$ is a residually nilpotent $\mathbb
Z[\langle b\rangle]$-module. This implies that the Lie power
$L^{n+1}(M)$ also is a residually nilpotent $\mathbb Z[\langle
b\rangle]$-module and the inductive step is done,
$$
\gamma_{(n+1)\omega}(G)\subseteq H_{n+2},
$$
proving (\ref{omegan}) for all $n$.

Since $H_1$ is a free group, it is residually nilpotent, and we
get
$$
\gamma_{\omega^2}(G)=\cap_n\gamma_{n\omega}(G)\subseteq
\cap_nH_{n+1}=\cap_n\gamma_{n+1}(H_1)=1.
$$
Since $[a^b,a]\in \gamma_{\omega}(G),$ we have
$$
1\neq [[a^b,a],a,\underbrace {[a^b,a],\dots, [a^b,a]}_k]\in
\gamma_{(k+1)\omega}(G),\ k\geq 1.
$$
This element clearly is non-trivial, since it is a basic
commutator in the basic elements $a,a^b$ of the free group
$H_1=F(a,a^b)$. Hence, the lower central series length of $G$ is
$\omega^2$ and theorem 1 is proved.

\begin{remark}
Observe that one can easily use the proof of theorem 1 to prove
the following more general result. Let $F_n$ be a free group and
an element $t$ acts on $F_n$ as an automorphism $\phi:F_n\to F_n$.
Define $\Phi=F_n\rtimes \langle t\rangle$. Suppose that any tensor
power $(F_{ab})^{\otimes k}\ (k\geq 1)$ is a residually nilpotent
$\mathbb Z[\langle t\rangle]$-module. Then
$\gamma_{\omega^2}(\Phi)=1$. In particular, if $\Phi$ is not
residually nilpotent, then the lower central series length of
$\Phi$ is exactly $\omega^2$.
\end{remark}

We add the next remark here due to its applications in the theory
of localizations \cite{Mikha}.
\begin{remark} The $2\omega$th lower central series
term of $G$ can be described as follows
\begin{equation}\label{2o}\gamma_{2\omega}(G)=\langle
[a,a^b,a],[a,a^b,a^b]\rangle^G\end{equation}
\end{remark}

We proved before that there is an inclusion
$\gamma_{2\omega}(G)\subseteq H_3.$ To see the converse inclusion,
observe that $a$ and $a^b$ are generalized 3-torsion elements,
hence $$[a,a^b,a]^{3^l}, [a,a^b,a^b]^{3^l}\subseteq
\gamma_{\omega+l}(G),\ l\geq 1.$$ However, the same is true for
powers of 2, due to (\ref{relation}). Therefore, $H_3\subseteq
\gamma_{2\omega}(G)$ and description (\ref{2o}) follows.

\vspace{.5cm}\noindent {\it Acknowledgements.} The author thanks
G. Baumslag, S.O. Ivanov, F. Petrov and K. Orr for discussions
related to the subject of the paper.


\begin{thebibliography}{XXX}

\bibitem{Baumslag} G. Baumslag: Some problems on one-relator groups; Proceedings
of the Second International Conference on the Theory of Groups:
Australian National University, August, 1973, {\it Lecture Notes
in Mathematics}, {\bf 372} (1974).


\bibitem{CochranOrr} T. Cochran and K. Orr: Kent E. Stability of lower central series of compact 3-manifold groups, {\it Topology} {\bf 37} (1998), 497--526.

\bibitem{Levine} J. Levine: Finitely-presented groups with long lower central
series, {\it Israel J. Math.} {\bf 73}, (1991), 57--64.

\bibitem{Magnus} W. Magnus: \"{U}ber Beziehungen zwischen
h\"oheren Kommutatoren, {\it J. reine angew. Math.} {\bf 177}
(1937), 105-115.

\bibitem{Mikha} R. Mikhailov: On transfinite nilpotence of the Vogel-Levine
localization, preprint

\bibitem{Witt} E. Witt: Treu Darstellung Liescher Ringe, {\it J. reine angew. Math.} {\bf 177}
(1937), 152-160
\end{thebibliography}
\end{document}